\documentclass[12pt]{amsart}


\usepackage[utf8]{inputenc}
\usepackage[english]{babel}
\usepackage{amsmath}
\usepackage{amsfonts}
\usepackage{amssymb}
\usepackage{graphicx}
\usepackage{mathtools}
\usepackage{amsthm}

\newtheorem{thm}{Theorem}[section]
\newtheorem{prop}[thm]{Proposition}
\newtheorem{lem}[thm]{Lemma}
\newtheorem{cor}[thm]{Corollary}
\newtheorem{rem}[thm]{Remark}
\newtheorem{ass}{Assumption}

\theoremstyle{definition}
\newtheorem{defi}{Definition}[section]
\newcommand{\R}{\mathbb{R}}
\renewcommand{\P}{\mathbb{P}}
\newcommand{\N}{\mathbb{N}}

\newcommand{\C}{\mathbb{C}}

\newcommand{\E}{\mathbb{E}}

\newcommand{\var}{\mathrm{var}}
\newcommand{\supp}{\mathrm{supp}}
\newcommand{\cK}{\mathcal{K}}
\newcommand{\cP}{\mathcal{P}}
\newcommand{\La}{\Lambda}
\newcommand{\locally}{\mathcal{I}_{1,\mathrm{loc}}}
\newcommand{\tr}{\mathrm{Tr}}

\title{Number variance for homogeneous determinantal processes on hyperbolic spaces}
\author{Pierre Lazag}

\usepackage{fullpage}

\usepackage{hyperref}
\begin{document}

\date{}
\maketitle

\address{LAREMA, UMR CNRS 6093, 2 Boulevard Lavoisier 49045 Angers cedex 01, France}

\email pierrelazag@hotmail.fr 

\normalsize

\begin{abstract}
    We consider an abstract determinantal point process on a general non--elementary Gromov hyperbolic metric space governed by an orthogonal projection in the case when the space is homogeneous and the point process is invariant under isometries. We give a lower bound of the variance of the number of points inside a ball that is proportional to the volume of the ball. In particular, such point processes are never hyperuniform. Our result applies to the known examples of radial determinantal point processes on Cayley trees and on the standard hyperbolic spaces governed by Bergman projection kernels. 
\end{abstract}

\section{Introduction}
Determinantal point processes are random point fields for which the correlation functions are expressed by the determinant of a certain kernel. Introduced by O. Macchi in \cite{Macchi75}, they have been the subject of a systematic study in \cite{Sos00}, \cite{ST03a}, \cite{HKPV09}, and appear in many different contexts such as random matrix theory, combinatorics, representation theory, mathematical physics, see e.g. \cite{borodindet} and references therein as well as the references cited above.\\

By the Macchi-Soshnikov/Shirai-Takahashi Theorem (\cite{Sos00}, \cite{ST03a}, see also \cite{HKPV09}), see Theorem \ref{thm:machhisoshnikov} below, the kernel of any locally trace class orthogonal projection serves as a kernel of a determinantal point process, and one could ask about the general properties of such a point processes. For instance, the total number of points of such a point process is deterministic, and is equal to the rank of the projection. We shall also see that the variance of the number of points inside a bounded set is always dominated by the expectation of the number of points. We consider here infinite rank orthogonal projections acting on the space of square integrable functions on a hyperbolic metric space in the sense of Gromov, see \cite{gromov}, \cite{ghys-delaharpe}, and ask the question of the behavior of the variance of the number of point inside a ball.

We make the assumption that the underlying space is homogeneous in the sense that the group of its isometries acts transitively and leaves the reference measure invariant, and that the point process is invariant under isometries. Such a framework encompasses the  \emph{continuous} cases of determinantal point processes on the Poincar\'e disk governed by Bergman projections appearing first in \cite{peres-virag} as the zero set of Gaussian analytic functions and further generalized in \cite{Kri09} for weighted Bergman spaces as well as in \cite{demni-lazag} for poly-analytic Bergman spaces and in \cite{bufetov-qiu-bergman} in higher dimensions, as well as the \emph{discrete} cases of radial determinantal point processes on Cayley trees defined and characterized in \cite{qiu-trees}.

Whereas for all translation invariant determinantal point processes on the Euclidean space $\R^d$ the variance of the number of points inside a large ball is asymptotically negligible compared to the expectation of the number of points, see \cite{KLS}, \cite{Torquato2018}, \cite{Sos00}, the main result of this paper, Theorem \ref{thm:thm1} below, gives a non-asymptotic lower bound for this variance that is proportional to the expectation. In our understanding, it is remarkable that, although the assumption of homogeneity of the process might seem to be strong, the assumption of hyperbolicity we use, that is quite coarse and general and applies both to the continuous and discrete cases, is sufficient to force determinantal point processes with symmetries to be chaotic, in the sense that they have a large variance, regardless of the analytic properties of their correlation kernel. In particular, hyperbolicity prevents from hyperuniformity, i.e. from having a growth of the variance that is negligible compared to the growth of the expectation, see \cite{Torquato2018} for a survey of the notion of hyperuniformity, as well as for more details about the expected regimes for the growth of the variance for translation invariant point processes in Euclidean spaces.\\

We describe below our main result with more precision.
\subsection{Main result}
Let $(S,d)$ be a proper complete and separable metric space. We equip $S$ with the Borel sigma-algebra induced by the distance $d$, and we let $\lambda$ be a Radon measure on $S$. For $x \in S$ and $R>0$, we denote by $B_R(x)$ the open ball centered at $x$ and of radius $R$. We fix a base point $o \in S$ and simply denote by $B_R$ the ball $B(o,R)$. We will make the following geometric assumptions on $(S,d,\lambda)$, and we refer to section \ref{sec:geometry} below for precise definitions.
\begin{ass} \label{ass:lengthspace} The space $(S,d)$ is a length space.
\end{ass}
\begin{ass}\label{ass:deltahyperbolic} There exists $\delta \geq 0$ such that the space $(S,d)$ is $\delta$-hyperbolic.
\end{ass}
\begin{ass}\label{ass:expgrowth}The balls in the space $(S,d,\lambda)$ have an exponentially increasing size.
\end{ass}
Our main result is then the following Theorem. We refer to section \ref{sec:dpp} below for notation and definition of determinantal point processes, and to section \ref{sec:homogenous} for the notion of homogeneous spaces and homogeneous determinantal point processes.
\begin{thm} \label{thm:thm1}Let $\P$ be a homogeneous determinantal point process on the homogeneous space $(S,d,\lambda)$. If the space $(S,d)$ satisfies assumptions \ref{ass:lengthspace}, \ref{ass:deltahyperbolic} and \ref{ass:expgrowth}, then, there exists $C>0$ such that for all $R>0$, we have
\begin{align}
\frac{\var( \Xi(B_R))}{\E [ \Xi(B_R) ]} \geq C.
\end{align}
\end{thm}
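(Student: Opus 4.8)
The plan is to reduce the statement to a purely geometric lower bound on a boundary integral, and then to exploit exponential volume growth together with hyperbolicity to convert that boundary quantity into something proportional to the volume. First I would record the two standard identities for a determinantal process governed by a self-adjoint, locally trace class projection kernel $K$. Writing $\kappa := K(o,o)$, homogeneity makes $x \mapsto K(x,x)$ constant, so $\E[\Xi(B_R)] = \int_{B_R} K(x,x)\,d\lambda(x) = \kappa\,\lambda(B_R)$. For the variance, using $\rho_2(x,y) = K(x,x)K(y,y) - |K(x,y)|^2$ together with the projection identity $K(x,x) = \int_S |K(x,y)|^2\,d\lambda(y)$ coming from $K = K^2 = K^*$, one obtains
\begin{align*}
\var(\Xi(B_R)) &= \int_{B_R} K(x,x)\,d\lambda(x) - \int_{B_R}\int_{B_R} |K(x,y)|^2\,d\lambda(x)\,d\lambda(y) \\
&= \int_{B_R}\int_{S\setminus B_R} |K(x,y)|^2\,d\lambda(y)\,d\lambda(x).
\end{align*}
Thus the theorem is equivalent to the bound $\int_{B_R}\int_{S\setminus B_R}|K(x,y)|^2 \geq C\kappa\,\lambda(B_R)$, and the whole difficulty is to show that this ``boundary coupling'' is proportional to the volume of the ball.

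Second, I would introduce the \emph{escaping mass} $e_R(x) := \int_{S\setminus B_R}|K(x,y)|^2\,d\lambda(y) = \kappa - \int_{B_R}|K(x,y)|^2\,d\lambda(y)$, so that $\var(\Xi(B_R)) = \int_{B_R} e_R(x)\,d\lambda(x)$. I fix a width $T>0$ and consider the inner shell $A_R := B_R \setminus B_{R-T}$; Assumption \ref{ass:expgrowth} furnishes a constant $c_1 \in (0,1)$ with $\lambda(A_R) \geq c_1\,\lambda(B_R)$ for all large $R$. The goal becomes: there is $c_0>0$ such that $e_R(x) \geq c_0$ on a subset of $A_R$ of measure at least $\tfrac12 \lambda(A_R)$. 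Granting this, $\var(\Xi(B_R)) \geq \tfrac12 c_0 c_1\,\lambda(B_R)$, and dividing by $\E[\Xi(B_R)] = \kappa\,\lambda(B_R)$ yields the theorem with $C = c_0 c_1/(2\kappa)$. To produce escaping mass for a fixed $x \in A_R$, I would use Assumption \ref{ass:lengthspace} (a proper complete length space is geodesic) and Assumption \ref{ass:deltahyperbolic} (the segment $[o,x]$ fellow-travels a geodesic ray toward the non-empty boundary) to find an outward point $x^+$ with $d(o,x^+) \geq R + r_1$ and $d(x,x^+) \leq C'$, where $r_1, C'$ depend only on $\delta$ and $T$. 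Then $B_{r_1}(x^+) \subseteq S \setminus B_R$, whence $e_R(x) \geq \int_{B_{r_1}(x^+)} |K(x,y)|^2\,d\lambda(y)$. Choosing an isometry $g$ with $g(x)=o$ and transporting by homogeneity, this equals $\Phi(gx^+)$, where $\Phi(w) := \int_{B_{r_1}(w)}|K(o,z)|^2\,d\lambda(z)$ is a \emph{fixed}, $R$-independent continuous function and $gx^+$ lies in the fixed compact set $\overline{B_{C'}(o)}$. Since $\int_S \Phi\,d\lambda = \kappa\,\lambda(B_{r_1}(o)) > 0$, the function $\Phi$ is strictly positive on an open set.

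The main obstacle is exactly the final step: guaranteeing that for a definite fraction of $x \in A_R$ the transported point $gx^+$ lands where $\Phi \geq c_0$, \emph{uniformly in $R$} — that is, ensuring the outward directions cannot all be swallowed by a possible zero set of $K(o,\cdot)$. I expect this to require a directional-spreading argument: hyperbolicity forces the forward shadows at $o$ of distinct boundary directions to become essentially disjoint beyond bounded depth, so if $e_R(x) < c_0$ held on all but a small fraction of the shell, then the complementary inside-mass $\int_{B_R}|K(x,y)|^2 > \kappa - c_0$ would, after transport to $o$, place more than $\kappa - c_0$ of the single fixed measure $|K(o,z)|^2\,d\lambda(z)$ into many essentially disjoint directional regions at once, exceeding its total mass $\kappa$ as soon as $c_0 < \kappa/2$ — a contradiction. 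This is precisely the step where both hyperbolicity (separation of directions) and exponential growth (the shell carries a definite fraction of the volume) are indispensable, and it is the reason the conclusion fails in $\R^d$, where the shell is of strictly lower order than the ball. Finally, the remaining range $R \leq R_0$ is handled separately: the ratio $\var(\Xi(B_R))/\E[\Xi(B_R)]$ is continuous and strictly positive on $(0,R_0]$ and tends to $1$ as $R \to 0^+$ (small balls hold at most one point up to negligible probability), so its infimum there is positive, and taking the smaller of the two constants gives a single $C>0$ valid for all $R>0$.
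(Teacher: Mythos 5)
Your reduction of the theorem to the statement that the escaping mass $e_R(x)=\int_{S\setminus B_R}|K(x,y)|^2\,d\lambda(y)$ is bounded below on a definite fraction of the shell $A_R=B_R\setminus B_{R-T}$ is a genuinely different decomposition from the paper's, but the proposal stops exactly where all the difficulty lies, and the sketch you give for that step does not hold up. After transporting $x\in A_R$ to $o$ by an isometry $g_x$, the hypothesis ``$e_R(x)<c_0$'' says that the ball $B_R(g_x(o))$, whose center lies at distance $d(o,x)<R$ from $o$, carries mass $>\kappa-c_0$ of the fixed measure $|K(o,z)|^2\,d\lambda(z)$. These transported balls are \emph{not} ``essentially disjoint directional regions'': each of them contains $o$, and the only way to control the overlap of two of them is precisely the hyperbolic lunule estimate (the paper's Proposition \ref{prop:hyperboliclunule}), which confines the intersection to a bounded ball only when the two centers are $2R-O(1)$ apart. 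So to run your union-bound contradiction you would still need (i) that lunule lemma, which you never formulate; (ii) the nontrivial fact that any subset of the shell of measure at least $\tfrac12\lambda(A_R)$ contains two such nearly antipodal points; and (iii) strict positivity of $\int_{S\setminus B_r(o)}|K(o,z)|^2\,d\lambda(z)$ for a suitable fixed $r$, without which no contradiction can arise --- your threshold $c_0<\kappa/2$ is not sufficient, since the two balls overlap in a bounded ball that may a priori carry almost all of the mass $\kappa$. In addition, the construction of the outward point $x^+$ tacitly extends the geodesic $[o,x]$ beyond $x$, which a general length space does not permit, and the final patch for small $R$ is unfounded under the paper's hypotheses: exponential size of balls forces $\lambda(B_R)\ge c^{-1}$ for every $R>0$, so $\E[\Xi(B_R)]$ does not tend to $0$, the ratio need not tend to $1$ as $R\to 0^{+}$, and the strict positivity of the ratio at each fixed $R$ is a statement of the same nature as the theorem itself rather than a soft continuity fact.

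The paper's proof shows these obstacles can be bypassed entirely. Instead of analyzing $e_R$ pointwise on the shell, it performs the isometry change of variables inside the variance integral and applies Fubini, obtaining $\var(\Xi(B_R))=\int_S|K(x,o)|^2\,\lambda(B_R^c\cap B_R(x))\,d\lambda(x)$ (Proposition \ref{prop:variance}); this decouples the kernel, which becomes an $R$-independent density, from the geometry, which now enters only through the volume of the lunule $B_R^c\cap B_R(x)$. Hyperbolicity is used exactly once, through the containment $B_R\cap B_R(x)\subset B(p,R-d(o,x)/2+2\delta)$, and together with exponential growth it gives $\lambda(B_R^c\cap B_R(x))\ge c^{-2}\lambda(B_R)\bigl(1-e^{-\alpha(d(o,x)/2-2\delta-2\log(c)/\alpha)}\bigr)$ for all $x$ outside a fixed ball and all $R>0$ simultaneously (Corollary \ref{cor:exp}). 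Integrating against $|K(o,x)|^2$ and dividing by $\E[\Xi(B_R)]=K(o,o)\lambda(B_R)$ yields the constant (\ref{eq:constant}) uniformly in $R$, with no shell decomposition, no antipodal pairs, and no separate small-$R$ case. The only point your approach and the paper's genuinely share is item (iii): the positivity of the tail mass of $|K(o,\cdot)|^2$, which is also what the positivity of the paper's constant rests on.
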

The constant $C$ will be given in section \ref{sec:proof} below, see the formula (\ref{eq:constant}). Precise asymptotic formulas for the variance in hyperbolic contexts have been given in \cite{peres-virag}, \cite{demni-lazag}, \cite{bufetov-qiu-bergman}. We here stay at a qualitative and coarse level and lose in precision what we gain in generality.\\

It is worth noting that the Poisson point process, which fully uncorrelated and in a sense a degenerate determinantal point process, has the variance that is equal to the expectation. Our result can be interpreted as the fact that determinantal point processes on hyperbolic space, although having strong correlations, resemble the Poisson point process, and is thus in accordance with the result from \cite{bufetov-fan-qiu} where it is shown that the analytic properties of the Bergman space the authors consider imply that the corresponding determinantal point processes are insertion-deletion tolerant.
\subsection{Scheme of the proof and organisation of the paper}
We first recall the definition and general properties of determinantal point processes in section \ref{sec:dpp}.

Our proof first lies then on the interpretation of the variance in terms of the volume of a lunule in the formula given in \ref{prop:hyperboliclunule} below and that holds in the general context of homogeneous determinantal point processes. Such a formula has been used in\cite{demni-lazag} and \cite{KLS}, and is stated and proved here in greater generality in section \ref{sec:homogenous}.

We then show how to control the volume of the lunule in the hyperbolic context in section \ref{sec:geometry}, and finally conclude the proof in section \ref{sec:proof}.

\subsection{Acknowledgements}I would like to thank Adrien Boulanger for helpful discussions concerning hyperbolic geometry. This project recieved fundings from project ULIS 2023-09915 from R\'egion Pays de la Loire.
\section{Determinantal point process} \label{sec:dpp}
We refer to \cite{daley-verejones} for generalities on point processes. A configuration $\Xi$ on $S$ is a non-negative integer valued Radon measure on $S$. The space of configurations, denoted by $\text{Conf}(S)$, is again a complete separable metric space, and we equip it with its Borel sigma-algebra. A configuration $\Xi \in \text{Conf}(S)$ is said to be {\it simple} if one has $\Xi(\{x\}) \in \{0,1\}$ for any $x \in S$.
\begin{defi}A {\it point process} on $S$ is a probability measure on $\text{Conf}(S)$. A point process is simple if it is supported on simple configurations.
\end{defi}
For a simple point process $\P$ on $S$ with the reference measure $\lambda$ and a positive integer $n \in \N$, the $n$-th correlation function $\rho_n : S^n \rightarrow \C$ of $\P$, if it exists, is defined by :
\begin{multline}\label{eq:correl}\int_{\text{Conf}(S)} \overset{*}{\sum_{x_1,...,x_n \in \supp (\Xi)}}f(x_1,...,x_n) d\P(\Xi) \\
 = \int_{S^n} f(x_1,...,x_n) \rho_n(x_1,...,x_n) d\lambda(x_1)...d\lambda(x_n),
\end{multline}
for any measurable compactly supported function $f : S^n \rightarrow \C$. Here, the sum $\overset{*}{\sum}$ is taken over all pairwise distinct points $x_1,...,x_n \in \supp(\Xi)$. The sequence of correlation functions of a point process $\P$ characterizes $\P$.
\begin{defi}\label{def:dpp}A point process $\P$ on $S$ is a {\it determinantal point process } if there exists a kernel :
\[ K : S \times S \rightarrow \C.
\]
such that, for any $n \in \N$, the $n$-th correlation function exists and is given by :
\[\rho_n(x_1,...,x_n) = \det \left( K(x_1,x_n) \right)_{i,j=1}^n.
\]
The kernel $K$ is called the \emph{correlation kernel} of the point process $\P$.
\end{defi}
Such a kernel defines an integral operator $\cK$ :
\begin{align*} \cK : \hspace{0.1cm} L^2(S,\lambda) &\rightarrow L^2(S,\lambda) \\
f &\mapsto \left( x \mapsto \int_S f(y)K(x,y) d\lambda(y) \right).
\end{align*}
For a bounded set $\Lambda \subset S$, we denote by $\cP_\Lambda$ the restriction operator onto $\Lambda$, i.e. the operator of multiplication by the indicator function of $\La$, denoted by $\mathfrak{1}_\La$. An integral operator $\cK$ with kernel $K$ is locally of trace class if for any bounded Borel set $\Lambda \subset S$, the operator $\cP_\Lambda \cK \cP_\Lambda$ is of trace class, i.e.:
\[  \int_\Lambda K(x,x) d\lambda(x) <+ \infty. \]
We denote by $\locally(S,\lambda)$ the ideal of locally trace class operators on $L^2(S,\lambda)$. All locally trace class operators admit a kernel. We recall the following fundamental result (see \cite{Sos00}, \cite{ST03a}, \cite{HKPV09}):
\begin{thm} \label{thm:macchisoshnikov} Let $\cK \in \locally(S,\lambda)$ be a hermitian locally trace class operator. Then, its kernel $K$ serves as the kernel of a determinantal point process if and only if $0 \leq \cK \leq I$. In particular, any locally trace class orthogonal projector onto a closed subspace $ L \subset L^2(S,\lambda)$ gives rise to a determinantal point process.
\end{thm}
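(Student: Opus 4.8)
The plan is to prove the two implications separately: the forward direction reduces to a family of spectral positivity constraints, and the converse to an explicit construction followed by an extension argument. For \emph{necessity}, suppose $K$ is the correlation kernel of a determinantal point process. The operator inequality $\cK \geq 0$ comes from positivity of the correlation functions: each $\rho_n(x_1,\dots,x_n) = \det(K(x_i,x_j))_{i,j=1}^n$ is the density of the $n$-th factorial moment measure, hence non-negative $\lambda^{\otimes n}$-almost everywhere, and since these determinants exhaust all principal minors of the Hermitian kernel $K$, their non-negativity says exactly that $K$ is of positive type, so $\cK_\La \geq 0$ on every bounded $\La$ after the usual regularisation. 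The bound $\cK \leq I$ is subtler. Fixing a bounded Borel set $\La$, the operator $\cK_\La := \cP_\La \cK \cP_\La$ is trace class with eigenvalues $\{\lambda_i\} \subset [0,\infty)$, and the probability generating function of the count is the Fredholm determinant $\E[z^{\Xi(\La)}] = \det(I - (1-z)\cK_\La) = \prod_i(1 - \lambda_i + \lambda_i z)$. As $\Xi(\La)$ is a genuine $\N$-valued random variable this power series has non-negative coefficients and therefore cannot vanish on $(0,1)$; but an eigenvalue $\lambda_i > 1$ would produce a zero at $1 - 1/\lambda_i \in (0,1)$, so no eigenvalue exceeds $1$. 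Hence $0 \leq \cK_\La \leq I$ for every bounded $\La$, and letting $\La \uparrow S$ gives $0 \leq \cK \leq I$.

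For \emph{sufficiency}, assume $0 \leq \cK \leq I$; I would work locally, since on each bounded $\La$ the operator $\cK_\La$ is trace class with spectrum in $[0,1]$, and then glue. The basic building block is the orthogonal-projection case, which also yields the final ``in particular'' assertion. When $\Pi$ projects onto the span of orthonormal $\phi_1,\dots,\phi_N$, with kernel $K(x,y) = \sum_{k=1}^N \phi_k(x)\overline{\phi_k(y)}$, I would define the process to be the $N$-point ensemble with joint density proportional to $|\det(\phi_k(x_j))_{k,j=1}^N|^2 = \det(K(x_i,x_j))_{i,j=1}^N$, and then check by the Andr\'eief/Gram identity and orthonormality that integrating out the last variables reproduces the correlation functions $\rho_n = \det(K(x_i,x_j))_{i,j=1}^n$ for all $n \leq N$.

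For a general contraction $0 \leq \cK_\La \leq I$ I would use the thinning construction: diagonalise $\cK_\La = \sum_i \lambda_i\, \phi_i \otimes \phi_i$, draw independent Bernoulli($\lambda_i$) variables $\xi_i$, form the random finite-rank projection $\Pi_\xi = \sum_{i : \xi_i = 1} \phi_i \otimes \phi_i$ (which is almost surely finite rank because $\sum_i \lambda_i = \tr \cK_\La < \infty$), and sample the projection ensemble attached to $\Pi_\xi$. Since correlation functions are linear in the process, the mixture has correlation functions $\E_\xi[\det(K_{\Pi_\xi}(x_i,x_j))_{i,j=1}^n]$, and a Cauchy--Binet expansion combined with $\E_\xi[\prod_{k \in S} \xi_k] = \prod_{k \in S} \lambda_k$ collapses this average back to $\det(\cK_\La(x_i,x_j))_{i,j=1}^n$. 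Finally, the restriction to $\La \subset \La'$ of the process built on $\La'$ is again determinantal with kernel $\cK_\La$, so the family over an exhaustion of $S$ is consistent; the extension theorem (Lenard's theorem, or a Kolmogorov-type projective limit), together with the fact recalled above that correlation functions determine a point process, then produces a single determinantal point process on $S$ with correlation kernel $K$.

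The main obstacle I anticipate is the sufficiency direction as a whole, and within it two points in particular: the combinatorial verification that averaging the projection minors over the Bernoulli field returns $\det(\cK_\La(x_i,x_j))$, and the passage from bounded sets to all of $S$, where one must control the consistency and the growth of the correlation functions to guarantee both existence and uniqueness of the limiting process. On the necessity side the one delicate ingredient is $\cK \leq I$: unlike $\cK \geq 0$ it is invisible at the level of a single correlation function and genuinely requires the generating-function (equivalently, independent-Bernoulli-sum) description of $\Xi(\La)$.
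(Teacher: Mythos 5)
This theorem is not proved in the paper at all: it is recalled as a known result, with the proof delegated to the cited references (Soshnikov, Shirai--Takahashi, and Hough--Krishnapur--Peres--Vir\'ag). So there is no ``paper proof'' to match your argument against; what you have done is reconstruct the standard proof from exactly those references, and your reconstruction is essentially correct. The necessity argument (non-negativity of all principal minors of the Hermitian kernel gives $\cK \geq 0$; the zero of the probability generating function $\det(I-(1-z)\cP_\La\cK\cP_\La)$ at $z = 1-1/\lambda_i \in (0,1)$ rules out eigenvalues $\lambda_i>1$) and the sufficiency argument (finite-rank projection ensembles with density $|\det(\phi_k(x_j))|^2$, then Bernoulli thinning of the eigenvalues of $\cP_\La\cK\cP_\La$ collapsed via Cauchy--Binet, then consistency along an exhaustion and a Lenard-type extension) are precisely the route of Theorem 4.5.5 in the HKPV book, which is one of the three sources the paper cites for this statement.

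Two points deserve sharpening, both of which you partially flagged. First, in the necessity direction the identity $\E[z^{\Xi(\La)}]=\det(I-(1-z)\cP_\La\cK\cP_\La)$ is itself a theorem: it must be derived from the determinantal form of the correlation functions via the factorial-moment expansion, whose convergence is controlled by Hadamard's inequality $\det(K(x_i,x_j))\leq \prod_i K(x_i,x_i)$, and the resulting identity a priori holds only near $z=1$; one then needs analytic continuation (the PGF is analytic on the unit disc, the Fredholm determinant is entire) to compare the two functions on all of $(0,1)$ before the zero argument applies. Second, in the extension step, uniqueness of the process with the prescribed correlation functions is not automatic -- the paper's blanket assertion that correlation functions characterize the process also silently uses a growth condition -- but Lenard's criterion is satisfied here, again by the Hadamard bound. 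Neither point breaks your argument; they are the places where a full write-up would need to do real work beyond the sketch.
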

For a determinantal point process $\P$ with kernel $K$ and associated integral operator $\cK$, we have by definition \ref{def:dpp} and formula (\ref{eq:correl}) :
\begin{equation} \label{eq:expectation}
\E [ \Xi (\Lambda) ] = \int_\Lambda K(x,x)d\lambda(x) =\tr ( \cP_\Lambda \cK \cP_\Lambda )
\end{equation}
where here and in the sequel, $\E$ denotes the expectation with respect to $\P$. When $\cK$ is an orthogonal projector, the reproducing property:
\[ \int_S K(x,y) K(y,z)d\lambda(y) = K(x,z)\]
entails the following formula for the number variance:
\begin{multline}\label{eq:variance}
\var( \Xi(\Lambda) ):= \E[ \left(\Xi(\Lambda)- \E[\Xi(\Lambda)] \right)^2] 
= \frac{1}{2} \int_S \int_S | \mathfrak{1}_\La(x) - \mathfrak{1}_\La(y)| |K(x,y)|^2 d\lambda(x)d\lambda(y).
\end{multline}
The latter equality (\ref{eq:variance}) can be rewritten as :
\begin{equation}\label{eq:variance2}
\var(\Xi(\Lambda))= \int_\Lambda \int_{S \setminus \Lambda} |K(x,y)|^2 d\lambda(x) d\lambda(y).
\end{equation}
\begin{rem}
    Observe that the triangular inequality together with the reproducing property imply that the variance (\ref{eq:variance}) is always dominated by the expectation.
\end{rem}
\section{Homogeneous determinantal processes} \label{sec:homogenous}
The goal of this section is to introduce the notion of homogeneous determinantal point processes on homogeneous spaces. Its main result is proposition \ref{prop:variance} below, which expresses the number variance in terms of the volume of lunules, i.e. intersection of balls. We start by defining the notion of homogeneous spaces.
\begin{defi}The metric measured space $(S,d,\lambda)$ with base point $o$ is called homogeneous if:
\begin{enumerate}
\item[$\bullet$] the group of isometries $G = \text{Isom}(S)$ acts transitively on $S$;
\item[$\bullet$] the measure $\lambda$ is invariant by $G$.
\end{enumerate}
\end{defi}
A homogeneous determinantal point process is then defined as follows. We will always assume that its correlation kernel is the kernel of an orthogonal projection.
\begin{defi}Let $\P$ be a determinantal point process with kernel $K$ on a homogeneous space $(S,d,\lambda)$. We say that $\P$ is homogeneous if the modulus of its kernel is $G$-invariant :
\begin{align} \label{cond:homogenous}
|K(g.x,g.y)|=|K(x,y)|, \quad \forall g \in G, \hspace{0.1cm}\forall x,y \in S.
\end{align}
\end{defi}
\begin{rem}It is easily seen that the distribution of a homogeneous point process $\P$ is invariant by the action of the group $G$, as all the minors of the hermitian kernel $K$ are invariant by condition (\ref{cond:homogenous}).
\end{rem}
\begin{rem}If the kernel $K$ is radial, i.e. if its values $K(x,y)$ only depends on $d(x,y)$, then the corresponding point process is homogeneous. We do not, however, need this more restrictive assumption.
\end{rem}
The first property we can exhibit for a homogeneous determinantal point process, is that the mean number inside a subset of $S$ is proportional to the volume of this subset. We fix any base point $o \in S$, for which the choice is not relevant as the space is homogeneous.
\begin{prop}\label{prop:expectation} Assume that $(S,d,\lambda)$ is homogeneous, and let $\La \subset S$ be a measurable bounded subset of $S$. If $\P$ is a homogeneous determinantal point process on $S$ with kernel $K$, then we have:
\begin{align*}
\E[ \Xi (\La) ] = K(o,o) \lambda(\La).
\end{align*}
\end{prop}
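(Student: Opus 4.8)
The plan is to use the expectation formula (\ref{eq:expectation}), which gives
\begin{equation*}
\E[\Xi(\Lambda)] = \int_\Lambda K(x,x)\, d\lambda(x),
\end{equation*}
and to show that the homogeneity hypothesis forces the diagonal of the kernel to be constant. Indeed, since the space is homogeneous the isometry group $G$ acts transitively on $S$, so for any $x \in S$ there exists $g \in G$ with $g.o = x$. Applying condition (\ref{cond:homogenous}) with $y = x$ gives $|K(g.o, g.o)| = |K(o,o)|$, hence $|K(x,x)| = |K(o,o)|$ for every $x$. The key observation is then that the diagonal values $K(x,x)$ are not merely of constant modulus but are genuinely constant and equal to $K(o,o)$: because $\cK$ is an orthogonal projection, the reproducing property yields $K(x,x) = \int_S |K(x,y)|^2\, d\lambda(y) \geq 0$, so each diagonal value is a nonnegative real number, and a nonnegative real number of fixed modulus $|K(o,o)|$ must equal $K(o,o)$ itself.

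With the constancy $K(x,x) = K(o,o)$ established, I would substitute into the integral to obtain
\begin{equation*}
\E[\Xi(\Lambda)] = \int_\Lambda K(o,o)\, d\lambda(x) = K(o,o)\,\lambda(\Lambda),
\end{equation*}
which is exactly the claimed identity. The boundedness of $\Lambda$ together with the local trace class assumption ensures that the integral is finite, so all manipulations are justified.

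The only genuine subtlety, and the step I would flag as the main point to get right, is the passage from constancy of the \emph{modulus} of the diagonal to constancy of the diagonal value itself. The homogeneity condition (\ref{cond:homogenous}) is stated only for the modulus $|K|$, so a priori one only controls $|K(x,x)|$. The resolution is precisely the nonnegativity and reality of the diagonal of a projection kernel, which I would derive from the reproducing property $\int_S K(x,y)K(y,z)\, d\lambda(y) = K(x,z)$ specialized to $z = x$ together with the hermitian symmetry $K(y,x) = \overline{K(x,y)}$. This pins down the sign and eliminates any phase ambiguity, turning the modulus identity into the sharper pointwise identity needed to evaluate the integral.
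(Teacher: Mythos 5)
Your proof is correct and takes essentially the same approach as the paper: transitivity of the isometry group plus homogeneity makes the diagonal of the kernel constant, and formula (\ref{eq:expectation}) then gives the result. In fact you are slightly more careful than the paper, whose proof writes $K(g.x,g.x)=K(x,x)$ directly even though condition (\ref{cond:homogenous}) only controls the modulus; your observation that the reproducing property forces $K(x,x)=\int_S |K(x,y)|^2\,d\lambda(y)\geq 0$, so that constant modulus implies constant value, makes explicit a step the paper leaves implicit (it simply asserts $0 \leq K(o,o)$).
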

\begin{proof}Indeed, if $ x \in S$ and $g \in G$ is such that $g.x=o$, then
\[0 \leq K(o,o)= K(g.x,g.x)=K(x,x) \] does not depend on $x$, so that
\begin{align*}
    \E [ \Xi (\Lambda)] = \int_\Lambda K(x,x)= K(o,o) \lambda(\Lambda).
\end{align*}
\end{proof}
More interesting is the expression of the number variance in geometric terms, given by the following key proposition. 
\begin{prop}\label{prop:variance}Let $(S,d,\lambda)$ be a homogeneous space. If $\P$ is a homogeneous determinantal point process on $S$ with kernel $K$, then, for any $R>0$, one has :
\begin{align*}
\var(\Xi(B_R)) = \int_S |K(x,o)|^2 \lambda(B_R^c\cap B_R(x)) d\lambda(x),
\end{align*}
where $B_R^c = S \setminus B_R$ is the complementary set of $B_R$.
\end{prop}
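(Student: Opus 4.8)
The plan is to start from the expression (\ref{eq:variance2}) for the number variance,
\[
\var(\Xi(B_R)) = \int_{B_R}\int_{B_R^c}|K(x,y)|^2\,d\lambda(x)\,d\lambda(y),
\]
and to exploit the homogeneity condition (\ref{cond:homogenous}) together with the $G$-invariance of $\lambda$ in order to recenter the inner integral so that one of the two arguments of $K$ becomes the base point $o$. Heuristically, for a fixed $y\in B_R^c$ one chooses an isometry $g$ with $g.y=o$; since $g$ preserves $\lambda$ and $|K(g.x,g.y)|=|K(x,y)|$, the substitution $z=g.x$ turns $\int_{B_R}|K(x,y)|^2\,d\lambda(x)$ into $\int_{B(g.o,R)}|K(z,o)|^2\,d\lambda(z)$, i.e. the ball $B_R=B(o,R)$ is moved to the ball centered at the relocated origin $g.o$, a point at distance $d(o,y)\ge R$ from $o$. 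After a Fubini exchange this expresses the variance as $\int_S |K(z,o)|^2\,m(z)\,d\lambda(z)$, where $m(z)$ is the $\lambda$-measure of the outside points whose relocated ball contains $z$, and the whole task reduces to identifying $m(z)$ with $\lambda(B_R^c\cap B_R(z))$.

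To make the recentering canonical and to control $m(z)$ I would carry the computation up to the isometry group. Writing $S=G/K$ with $K=\mathrm{Stab}(o)$ compact and choosing the Haar measure $\mu$ on $G$ normalized so that it projects onto $\lambda$, one has $\int_S f\,d\lambda=\int_G f(g.o)\,d\mu(g)$. Lifting both integrations, using $|K(a.o,b.o)|=|K(o,a^{-1}b.o)|$ and the left-invariance of $\mu$ through the substitution $b=ac$, the variance becomes $\int_G |K(o,c.o)|^2\,J(c)\,d\mu(c)$, where $J(c)=\int_G \mathfrak{1}_{B_R}(a.o)\,\mathfrak{1}_{B_R^c}(ac.o)\,d\mu(a)$.

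The decisive step is then the evaluation of $J(c)$. Using unimodularity of $G$ (which holds for the isometry groups relevant here), the substitution $a\mapsto a^{-1}$ replaces the awkward term $d(o,ac.o)$ by $d(a.o,c.o)$, so that the integrand of $J(c)$ depends on $a$ only through the single point $a.o$; the $K$-fibre then integrates to one and $J(c)=\lambda(B_R\cap B_R(c.o)^c)=\lambda(B_R\setminus B_R(c.o))$. I would finish by the elementary lunule symmetry: any isometry sending $o$ to $c.o$ preserves $\lambda$, so the two balls have equal volume, whence $\lambda(B_R\setminus B_R(c.o))=\lambda(B_R(c.o)\setminus B_R)=\lambda(B_R^c\cap B_R(c.o))$. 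Since the integrand $|K(o,c.o)|^2 J(c)$ depends on $c$ only through $c.o$, projecting the $c$-integral back down to $S$ and using $|K(o,x)|=|K(x,o)|$ gives exactly the asserted formula.

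I expect the main obstacle to be making the recentering rigorous rather than any of the individual computations. A naive pointwise choice $y\mapsto g_y$ introduces an auxiliary point $g_y.o$ that is only well defined up to the stabilizer $K$, and the resulting inner measure $m(z)$ is a priori dependent on this choice of section, so the collapse to a genuine lunule volume is not visible at the level of $S$ alone. It is precisely the passage to the Haar measure on $G$, together with the unimodularity that allows $d(o,ac.o)$ to be traded for $d(a.o,c.o)$, that removes this ambiguity; once $J(c)$ is seen to depend only on $c.o$, the remaining ingredients (Fubini and the equal-radius volume identity) are routine.
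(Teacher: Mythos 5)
Your computations are all correct, but your route is genuinely different from the paper's, and the comparison is instructive. The paper stays entirely on $S$: starting from (\ref{eq:variance2}), it recenters the inner integral by choosing, for each $y\in B_R^c$, an isometry $g$ with $g.y=o$, and then applies Fubini. You instead lift everything to $G=\mathrm{Isom}(S)$, and your decisive step --- the substitution $a\mapsto a^{-1}$, which needs inversion-invariance of Haar measure, i.e.\ unimodularity --- is a mass-transport argument. The obstacle you flagged is not hypothetical: it sits inside the paper's own proof. After the change of variable $x\mapsto g.x$, the inner domain is $g.B_R=B(g.o,R)$, whereas the paper writes $B_R(y)$; these agree only if $g$ can be chosen to \emph{exchange} $o$ and $y$ (both $g.y=o$ and $g.o=y$). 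Such swap isometries exist in all the examples the paper targets (real and complex hyperbolic spaces, homogeneous trees), but their existence is not part of the definition of a homogeneous space, so the paper's recentering is, as written, exactly as incomplete as you anticipated, and your group-level argument is one legitimate way to repair it: once the identity is posed on $G$, the dependence on the choice of section disappears and the lunule volume $\lambda(B_R\setminus B_R(c.o))=\lambda(B_R^c\cap B_R(c.o))$ comes out cleanly.

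The flip side is that your own enabling hypothesis --- unimodularity of $\mathrm{Isom}(S)$, which you wave through as holding ``for the isometry groups relevant here'' --- is likewise not a consequence of the proposition's stated hypotheses. A homogeneous space in the paper's sense can have a non-unimodular isometry group: Trofimov's grandparent graph (a vertex-transitive proper graph with counting measure, whose automorphism group is the standard non-unimodular example) satisfies every hypothesis of Proposition \ref{prop:variance}, and there inversion does not preserve Haar measure, so your evaluation of $J(c)$ is not available. Thus both proofs silently strengthen the hypotheses, yours less so: when swap isometries exist the transport identity you need holds pointwise (so the paper's implicit assumption subsumes yours), while unimodularity can hold without any swap maps. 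Both assumptions are satisfied by the isometry groups of all spaces to which Theorem \ref{thm:thm1} is applied (semisimple Lie groups, automorphism groups of homogeneous trees). If you want your argument airtight at the stated level of generality, record unimodularity explicitly as a hypothesis, or prove it for the class of spaces under consideration; everything else in your outline (the lift via Weil's formula, the Fubini exchanges, the equal-volume lunule symmetry, and the final use of $|K(o,x)|=|K(x,o)|$ from hermitianness) is sound.
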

\begin{proof}
We first have from formula (\ref{eq:variance2}) that :
\begin{align*}
\var(\Xi(B_R))=\int_{y \in B_R^c} \int_{x \in B_R} |K(x,y)|^2d\lambda(x)d\lambda(y).
\end{align*}
Since $G$ acts transitively on $S$, one can choose, for $y\in B_R^c$, an isometry $g \in G$ such that $g.y=o$. Performing the change of variable $x \mapsto g.x$ and since $\lambda$ is $G$-invariant, we obtain :
\begin{align*}
\var(\Xi(B_R))=\int_{y \in B_R^c} \int_{x \in B_R(y)} |K(g^{-1}.x,g^{-1}.o)|^2 d\lambda(x) d\lambda(y).
\end{align*}
Since $\P$ is homogeneous, the latter equality reads:
\begin{align*}
\var(\Xi(B_R))=\int_{y \in B_R^c} \int_{x \in B_R(y)} |K(x,o)|^2 d\lambda(x) d\lambda(y).
\end{align*}
We now apply Fubini's Theorem. The variable $x$ will range over $\cup_{y \in B_R^c} B_R(y)=S\setminus \{o\}$, while for the variable $y$, we have $y \in B_R^c$ and $ x \in B_R(y)$ if and only if $y \in B_R^c$ and $y \in B_R(x)$. We thus obtain :
\begin{align*}
\var(\Xi(B_R))=\int_S |K(x,o)|^2\lambda(B_R^c \cap B_R(x)) d\lambda(x),
\end{align*}
which is the desired result.
\end{proof}
\section{The geometry of $S$}\label{sec:geometry}
We here precisely define the geometric setting where our result holds, given by assumptions \ref{ass:lengthspace}, \ref{ass:deltahyperbolic} and \ref{ass:expgrowth}. The results of this section are proposition \ref{prop:hyperboliclunule} which holds on general $\delta$-hyperbolic spaces, together with its consequence when the balls have an exponential size, given in corollary \ref{cor:exp}. We refer to \cite{ghys-delaharpe} for general notions of hyperbolicity in the sense of Gromov first defined in \cite{gromov}.
\begin{defi}Let $x,y \in S$. A \emph{geodesic arc from $x$ to $y$} is an isometry $\gamma_{x,y} : [0,d(x,y)] \subset \R \rightarrow S$ such that $\gamma_{x,y}(0)=x$ and $\gamma_{x,y}(d(x,y))=y$. We say that $(S,d)$ is a \emph{length space} if for all $x,y \in S$, there exists a geodesic arc from $x$ to $y$.
\end{defi}
Assume from now that $(S,d)$ is a length space. For $x,y \in S$, we may identify the geodesic $\gamma_{x,y}$ with its image and denote it by $[x,y]$. We now introduce the notion of $\delta$-hyperbolic spaces.
\begin{defi}Let $\delta \geq 0$ be a non-negative real number. We say that $(S,d)$ is \emph{$\delta$-hyperbolic} if, for any $x,y,z \in S$ and any $p \in [x,y]$, there exists $q \in [y,z] \cup [x,z]$ such that $d(p,q) \leq \delta$.
\end{defi}
We finally precisely define the notion of exponential size of balls.
\begin{defi}We say the space $(S,d,\lambda)$ has exponential size of balls if there exists $c, \alpha >0$ such that, for any $x \in S$ and any $R>0$, we have :
\begin{align*}
c^{-1}e^{\alpha R} \leq \lambda(B_R(x)) \leq c e^{\alpha R}.
\end{align*}
\end{defi}
We first state the main proposition of this section.
\begin{prop} \label{prop:hyperboliclunule}Let $\delta \geq 0$ and assume that $(S,d)$ is a $\delta$-hyperbolic length space. Let $R \geq 0$ and let $x, y \in S$. Set $r:= d(x,y)$ and let :
\begin{align*} p:= \gamma_{x,y} \left( \frac{r}{2} \right) \in [x,y]
\end{align*}
be the "middle point" between $x$ and $y$. Then we have :
\begin{align*}
B(x,R) \cap B(y,R) \subset B(p,R-r/2 +2\delta).
\end{align*}
\end{prop}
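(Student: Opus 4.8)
The plan is to prove the inclusion pointwise: I fix an arbitrary $z \in B(x,R) \cap B(y,R)$, so that $d(x,z) < R$ and $d(y,z) < R$, and show that $d(p,z) < R - r/2 + 2\delta$. To this end I form the geodesic triangle with vertices $x$, $y$, $z$. Since $p \in [x,y]$, the $\delta$-hyperbolicity assumption supplies a point $q \in [x,z] \cup [y,z]$ with $d(p,q) \leq \delta$. The entire argument then reduces to estimating $d(p,z)$ from the position of this comparison point $q$ on whichever of the two remaining sides it happens to lie.

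The key observation is that $p$, being the middle point, is equidistant from the two endpoints: $d(x,p) = d(y,p) = r/2$. This makes the two cases symmetric. Suppose first that $q \in [x,z]$. Because $q$ lies on a geodesic arc from $x$ to $z$, the distances add, $d(x,q) + d(q,z) = d(x,z)$. The triangle inequality gives $r/2 = d(x,p) \leq d(x,q) + d(p,q) \leq d(x,q) + \delta$, hence $d(x,q) \geq r/2 - \delta$. Subtracting, $d(q,z) = d(x,z) - d(x,q) < R - r/2 + \delta$, and a final triangle inequality yields $d(p,z) \leq d(p,q) + d(q,z) < \delta + (R - r/2 + \delta) = R - r/2 + 2\delta$. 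The case $q \in [y,z]$ is identical with $y$ replacing $x$, using $d(y,p) = r/2$ and $d(y,z) < R$. In both cases $z \in B(p, R - r/2 + 2\delta)$, which is the claim.

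I do not expect a serious obstacle here: the statement is a clean instance of the thin-triangle property, and the role of the midpoint is precisely to make the gain $r/2$ symmetric across the two cases (any other point of $[x,y]$ would produce an asymmetric bound with $d(x,p)$ or $d(y,p)$ in place of $r/2$). The only points requiring a little care are the bookkeeping of strict versus non-strict inequalities coming from the open balls, and the additivity of distance along $[x,z]$ or $[y,z]$, which is legitimate because $q$ sits on a geodesic segment and $\gamma_{x,z}$ (respectively $\gamma_{y,z}$) is an arc-length isometry.
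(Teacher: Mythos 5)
Your proof is correct and follows essentially the same route as the paper's: the same application of $\delta$-hyperbolicity to the triangle $x,y,z$ to produce the comparison point $q$, the same pair of triangle inequalities, and the same additivity of distance along the geodesic side containing $q$. In fact your bookkeeping of strict inequalities ($d(x,z) < R$ for the \emph{open} ball) is slightly more careful than the paper's, which writes $d(x,z) \leq R$ and concludes with a non-strict bound.
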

\begin{proof}
Let $z \in B(x,R) \cap B(y,R)$. Considering the geodesic triangle formed by the points $x,y$ and $z$, and using the $\delta$-hyperbolicity, we can choose a point $q \in [x,z] \cup [y,z]$ such that
 \begin{align} \label{ineq:hyperbolic0}
 d(p,q) \leq \delta.
 \end{align} Assume for example that $q \in [x,z]$. By the triangular inequality and using inequality (\ref{ineq:hyperbolic0}), we have :
\begin{align} \label{ineq:hyperbolic1}
d(p,z) \leq \delta + d(q,z).
\end{align}
On the other hand, we also have from the triangular inequality  and from (\ref{ineq:hyperbolic0}) that :
\begin{align} \label{ineq:hyperbolic2}
d(x,q) \geq d(x,p) -d(p,q) \geq r/2 -\delta.
\end{align}
Since $q \in [x,z]$, and since $d(x,z) \leq R$ we have :
\begin{align} \label{eq:hyperbolic0}
d(q,z)=d(x,z) -d(x,q) \leq R - d(x,q).
\end{align}
Plugin inequality (\ref{ineq:hyperbolic2}) into (\ref{eq:hyperbolic0}), one obtains :
\begin{align} \label{ineq:hyperbolic3}
d(q,z) \leq R + \delta -r/2.
\end{align}
Plugin the latter inequality (\ref{ineq:hyperbolic3}) into (\ref{ineq:hyperbolic0}), we finally have :
\begin{align*}
d(p,z) \leq R -r/2 + 2\delta,
\end{align*}
and the proof is complete.
\end{proof}
\begin{cor}\label{cor:exp} If $(S,d)$ is a $\delta$-hyperbolic length space and if besides $(S,d,\lambda)$ has exponential size of balls with constants $c,\alpha>0$, then, we have :
\begin{align} \label{ineq:vollunulehyperbolic}
\lambda(B_R^c \cap B_R(x)) \geq c^{-2} \lambda(B_R) \left( 1- e^{-\alpha (d(o,x)/2 - 2 \delta -2 \log(c) / \alpha)}\right) \mathfrak{1}_{B_{4 \left( \delta + \frac{\log (c)}{\alpha}\right)}^c}(x).
\end{align}
\end{cor}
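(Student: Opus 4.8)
The plan is to express the lunule volume as a difference of ball volumes and then control the intersection $B_R \cap B_R(x)$ using the hyperbolicity bound already established in Proposition \ref{prop:hyperboliclunule}. First I would write
\begin{align*}
\lambda(B_R^c \cap B_R(x)) = \lambda(B_R(x)) - \lambda(B_R \cap B_R(x)),
\end{align*}
which holds because $B_R(x)$ is partitioned by its intersections with $B_R$ and with $B_R^c$. All of the geometric content then sits in the second term.

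To bound that intersection, I would apply Proposition \ref{prop:hyperboliclunule} with the two points taken to be the base point $o$ and the point $x$. Setting $r := d(o,x)$ and letting $p$ be the midpoint of the geodesic $[o,x]$, the proposition gives $B_R \cap B_R(x) \subset B(p, R - r/2 + 2\delta)$, hence $\lambda(B_R \cap B_R(x)) \leq \lambda(B(p, R - r/2 + 2\delta))$. Now the exponential size of balls enters: its upper bound yields $\lambda(B(p, R - r/2 + 2\delta)) \leq c\, e^{\alpha(R - r/2 + 2\delta)}$, while its lower bound yields $\lambda(B_R(x)) \geq c^{-1} e^{\alpha R}$. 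Combining these and factoring out $c^{-1} e^{\alpha R}$ gives
\begin{align*}
\lambda(B_R^c \cap B_R(x)) \geq c^{-1} e^{\alpha R}\left(1 - c^2 e^{-\alpha(r/2 - 2\delta)}\right),
\end{align*}
and rewriting $c^2 = e^{2\log c}$ turns the bracket into $1 - e^{-\alpha(r/2 - 2\delta - 2\log(c)/\alpha)}$, which is precisely the factor appearing in the statement.

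The only delicate point, and the reason for the indicator function, is the final replacement of $c^{-1} e^{\alpha R}$ by $c^{-2}\lambda(B_R)$. Since $\lambda(B_R) \leq c\, e^{\alpha R}$, one has $c^{-1}e^{\alpha R} \geq c^{-2}\lambda(B_R)$, but substituting the smaller prefactor only preserves the inequality when the bracketed factor is nonnegative; otherwise multiplying a negative quantity by a larger number worsens the bound. That factor is nonnegative exactly when $r/2 - 2\delta - 2\log(c)/\alpha \geq 0$, i.e. when $d(o,x) \geq 4(\delta + \log(c)/\alpha)$, which is precisely the condition enforced by $\mathfrak{1}_{B_{4(\delta + \log(c)/\alpha)}^c}(x)$. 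I would therefore split into two cases: if $x \in B_{4(\delta + \log(c)/\alpha)}$ the right-hand side of the claim is $0$ and the inequality is trivial since $\lambda(B_R^c \cap B_R(x)) \geq 0$; otherwise the factor is nonnegative and the substitution is legitimate, giving the asserted bound. Thus the main obstacle is merely tracking the sign of the bracketed term rather than any genuine geometric difficulty, the geometry being fully absorbed into Proposition \ref{prop:hyperboliclunule}.
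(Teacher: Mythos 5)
Your proof is correct and follows essentially the same route as the paper: decompose the lunule as $\lambda(B_R(x))-\lambda(B_R\cap B_R(x))$, bound the intersection via Proposition \ref{prop:hyperboliclunule} together with the exponential upper bound on ball volumes, use the exponential lower bound for $\lambda(B_R(x))$, and then replace $c^{-1}e^{\alpha R}$ by $c^{-2}\lambda(B_R)$ with the indicator accounting for the sign of the bracketed factor. Your explicit case split on the sign of that factor is, if anything, a slightly more careful justification of the final substitution than the paper's chain of inequalities followed by taking the maximum with $0$, but the content is identical.
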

\begin{proof}
By proposition \ref{prop:hyperboliclunule} and from the upper bound given by the exponential size of balls, we have :
\begin{align*}
\lambda(B_R \cap B_R(x)) \leq ce^{\alpha (R- d(o,x)/2 + 2 \delta)},
\end{align*}
and thus, using the lower bound, we have :
\begin{align*}
\lambda(B_R^c \cap B_R(x) ) &= \lambda(B_R(x))- \lambda( B_R \cap B_R(x)) \\
 & \geq c^{-1}e^{\alpha R} - c e^{\alpha(R - d(o,x)/2 + 2 \delta)} \\
 &= c^{-1}e^{\alpha R} \left( 1 - e^{ -\alpha (d(o,x)/2 - 2 \delta -2 \log(c) / \alpha)} \right)\\
 & \geq c^{-2} \lambda(B_R)\left( 1 - e^{ -\alpha (d(o,x)/2 - 2 \delta -2 \log(c) / \alpha)} \right).
\end{align*}
Since $\lambda(B_R^c \cap B_R(x) ) \geq 0$, one can write :
\begin{align*}
\lambda(B_R^c \cap B_R(x) ) &\geq \max \left\{ c^{-2} \lambda(B_R)\left( 1 - e^{ -\alpha (d(o,x)/2 - 2 \delta -2 \log(c) / \alpha)} \right), 0  \right\} \\
&= c^{-2} \lambda(B_R) \left( 1- e^{-\alpha (d(o,x)/2 - 2 \delta -2 \log(c) / \alpha)}\right) \mathfrak{1}_{B_{4 \left( \delta + \frac{\log (c)}{\alpha}\right)}^c}(x),
\end{align*}
where the last equality follows from the fact that :
\begin{align} \label{ineq:pos}
 e^{-\alpha (d(o,x)/2 - 2 \delta -2 \log(c) / \alpha)} \leq 1 \Leftrightarrow d(o,x) \geq 4 \left( \delta + \frac{\log (c)}{\alpha} \right). 
\end{align}
The proof is complete.
\end{proof}
\section{Proof of Theorem \ref{thm:thm1}}\label{sec:proof}
The Theorem directly follows from Corollary \ref{cor:exp}, proposition \ref{prop:variance} and proposition \ref{prop:expectation}. Indeed, multiplying inequality (\ref{ineq:vollunulehyperbolic}) from Corollary \ref{cor:exp} by $|K(o,x)|^2$ and integrating over $x \in S$, one obtains by proposition \ref{prop:variance} that :
\begin{multline*}
\var ( \Xi(B_R) ) \geq \\
 c^{-2} \lambda(B_R) \int_{S \setminus B_{4 \left( \delta + \log(c) /\alpha \right)}} |K(o,x)|^2 \left( 1- e^{-\alpha (d(o,x)/2 - 2 \delta -2 \log(c) / \alpha)}\right) d\lambda(x).
\end{multline*}
Dividing by $\E [ \Xi(B_R) ]$ and applying proposition \ref{prop:expectation}, we are left with :
\begin{align*}
\frac{\var (\Xi( B_R))}{\E [ \Xi (B_R)]} \geq C,
\end{align*}
with :
\begin{align} \label{eq:constant}
C:=\frac{1}{c^2 K(o,o)} \int_{S \setminus B_{4 \left( \delta + \log(c) /\alpha \right)}} |K(o,x)|^2 \left( 1- e^{-\alpha (d(o,x)/2 - 2 \delta -2 \log(c) / \alpha)}\right) d\lambda(x).
\end{align}
The constant $C$ is positive by inequality (\ref{ineq:pos}) and since $\lambda \left( S \setminus B_{4 \left( \delta + \log(c) /\alpha \right)} \right)= + \infty$ (which for example follows from the exponential size of balls). Theorem \ref{thm:thm1} is proved completely.

\end{document}